\newtheorem{thm}{Theorem}
\newtheorem{cor}[thm]{Corollary}
\newcommand{\Tn}{T_n}
\begin{document}
\title{The $N$-queens Problem on a symmetric Toeplitz matrix }

\author{Zsuzsanna Szaniszlo}
\address{Department of Mathematics, Valparaiso University } 
\email{zsuzsanna.szaniszlo@valpo.edu}

\author{Maggy Tomova}
\address{Department of Mathematics, Rice University}
\email{mt2@rice.edu}

\author{Cindy Wyels}
\address{Department of Mathematics, CSU Channel Islands} 
\email{cindy.wyels@csuci.edu}

\begin{abstract}
We consider the problem of placing $n$ nonattacking queens on a symmetric $n \times n$ Toeplitz matrix. As in the $N$-queens Problem on a chessboard, two queens may attack each other if they share a row or a column in the matrix.  However, the usual diagonal restriction is replaced by specifying that queens may attack other queens that occupy squares with the same number value in the matrix. We will show that $n$ nonattacking queens can be placed on such a matrix if and only if $n\equiv 0,1 \mod 4$.

\vspace{.1in}
\noindent \textbf{2000 AMS Subject Classification:} 05B99
\vspace{0.1in}

\noindent
{\bf Keywords}:  $N$-queens Problem
\end{abstract}

\maketitle

The $N$-queens Problem, a generalization of the original 8-queens problem, asks whether $n$ nonattacking queens can be placed on an $n \times n$ chessboard in such a way that no queen can attack another, i.e., so that no two queens are placed in the same row or column or on the same diagonal.  The problem has been extensively studied since the mid-1800s;  for a brief summary of the history, see \cite{HHS,RVZ,Wat}.  A familiar extension of the problem (due to Polya) asks the same question 
for queens placed on a toroidal chessboard. Vardi also considers the toroidal semiqueens problem, in which a semiqueen may move ``...like a rook or bishop, but only on positive broken diagonals." \cite{Vardi} (See Figure \ref{fig:Vardifig}.) To clarify the phrase ``positive broken diagonals" without reference to a figure, number the rows and columns of an $n \times n$ chessboard from 1 to $n$, starting with the top left square. The $k$th positive broken diagonal then consists of all squares labeled $(i,j)$ for $i+j=k+1 \mod n$.

\begin{figure}
\begin{center} \includegraphics[scale=.4]{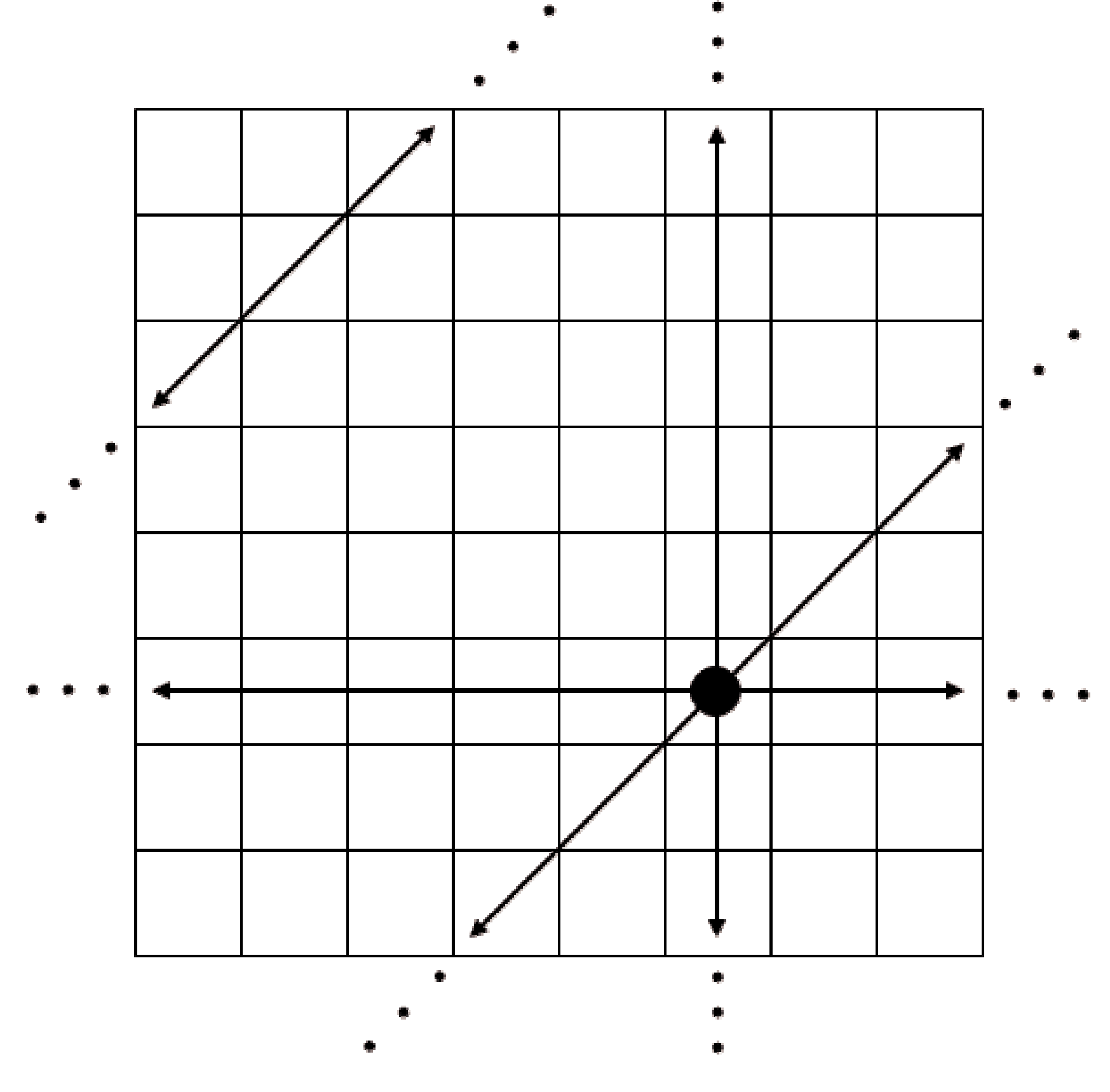}
\end{center} \caption{}
\label{fig:Vardifig} 
\end{figure}

In this paper we modify the chessboard again; now it corresponds to a symmetric Toeplitz matrix. A Toeplitz matrix has constant negative diagonals, i.e., entry $(i_1,j_1)$ equals entry $(i_2,j_2)$ whenever $i_1-j_1 =i_2-j_2$ \cite{PFTV}. In a symmetric Toeplitz matrix, we further require that entry $(i_1,j_1)$ equal entry $(i_2,j_2)$ whenever $|i_1-j_1| =|i_2-j_2|$. Queens may attack each other if they share a row or a column, or if both are on squares belonging to a set of the form $D_k = \{(i,j)\, |\, |i-j| = k\}$. We then ask when $n$ nonattacking queens can be placed on such an $n \times n$ chessboard. We will show that $n$ nonattacking queens can be placed on such a chessboard if and only if $n\equiv 0,1\mod 4$.

\begin{figure}
\begin{center} \includegraphics[scale=.4]{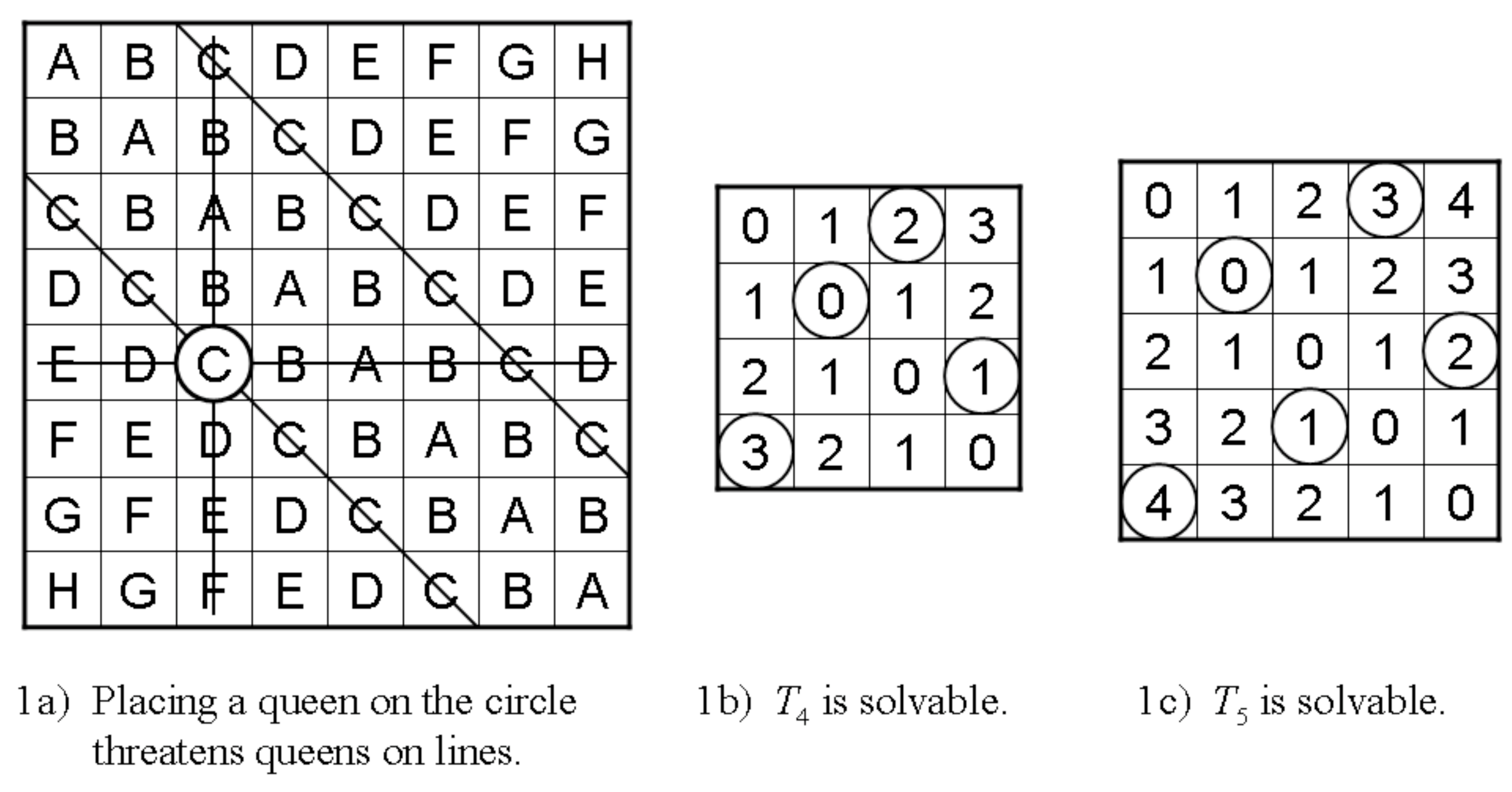}
\end{center}\caption{}
\label{fig:toep}
\end{figure}

To solve this problem we will consider the matrix $T_n$ with entries
given by $t_{ij}=|i-j|$, for $i,j = 1, \ldots ,n$.  We say that $T_n$ is solvable if and only if we can select $n$ entries from $T_n$ with values $0,\ldots ,n-1$ so that no two entries selected lie in the same row or the same column. 
Equivalently, $T_n$ is solvable if and only if there
exists a permutation  $ f:\{1,2,\dots ,n\} \to \{1,2,\dots ,n\}$
such that $|f(i)-i|$ assumes each of the values $\{0,1,\dots ,n-1\}$
exactly once.  (We will use both formulations throughout this paper.)  A solution set for $T_n$ is a set of pairs $S_n=\{(i,f(i))\, |\, i=1,\ldots ,n\}$ where $f$ is a permutation demonstrating $T_n$'s solvability.  For example, as indicated in Figure \ref{fig:toep}, solution sets for $T_4$ and $T_5$ are, respectively, $S_4=\{(1,3),(2,2),(3,4),(4,1)\}$ and $S_5 = \{(1,4),(2,2),(3,5),(4,3),(5,1)\}$.

\begin{thm} \label{thm:solvable}
$T_n$ is solvable if and only if $n\equiv 0,1 \mod 4$.
 \end{thm}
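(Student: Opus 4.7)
The plan is to prove the two directions separately. For \emph{necessity}, suppose $T_n$ is solvable by a permutation $f$. Then
\[
\sum_{i=1}^{n}|f(i)-i|\;=\;\sum_{k=0}^{n-1}k\;=\;\frac{n(n-1)}{2},
\]
while $\sum_{i=1}^{n}(f(i)-i)=0$ because $f$ is a bijection of $\{1,\ldots,n\}$. Since $|f(i)-i|\equiv f(i)-i\pmod{2}$ for every $i$, the left-hand side above must be even, so $n(n-1)/2$ is even. This forces $n(n-1)\equiv 0\pmod 4$, hence $n\equiv 0$ or $1\pmod 4$.

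For \emph{sufficiency}, I would give explicit constructions for the two residue classes, taking the examples $n=4,5$ displayed in the paper as base cases. Inspecting them suggests a natural template: one fixed point (providing the difference $0$) together with an $(n-1)$-cycle through the remaining positions whose consecutive steps realize the differences $1,2,\ldots,n-1$. The cycle can be initialized by the zig-zag $1\to n\to 2\to n-1\to 3\to n-2\to\cdots$, whose consecutive absolute step sizes are $n-1,n-2,n-3,\ldots$ and are therefore automatically distinct as long as the pattern holds. After the appropriate number of zig-zag steps, one steps around the intended fixed point and closes with a short tail that picks up the remaining small differences and returns to $1$ (as happens, for instance, in the hand-built solutions for $n=8,9$). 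Where the zig-zag stops, the location of the fixed point, and the precise closing tail depend on whether $n\equiv 0$ or $n\equiv 1\pmod 4$, so two parallel constructions are needed. An alternative I might try is an induction $T_n\Rightarrow T_{n+4}$ from the two base cases: shifting a $T_n$-solution into rows and columns $3,\ldots,n+2$ preserves its differences $0,1,\ldots,n-1$, but because no matching within the boundary block $\{1,2,n+3,n+4\}$ realizes the difference $n$, one must break a small number of shifted pairs and replace them by crossing pairs that straddle old and new indices, supplying the missing large differences while re-routing the displaced small ones.

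The main obstacle is the sufficiency direction. The parity obstruction is elementary, but exhibiting a permutation with all differences distinct is an existence claim whose bookkeeping is delicate: one must simultaneously ensure that $f$ is a bijection, that the cycle has signed step-sum $0$ so that it actually closes, and that the closing tail (or the re-routing in the inductive step) uses exactly the set of differences not already contributed by the zig-zag (respectively, by the shifted $T_n$-solution). Verifying this uniformly for both residue classes, and checking that no difference is duplicated and none is omitted, is the heart of the argument.
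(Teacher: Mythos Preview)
Your necessity argument is correct and in fact cleaner than the paper's. The paper squares the differences, derives $\sum_i i f(i) = \tfrac{1}{12}n(2n^2+9n+1)$, and then argues modulo $4$ and modulo $12$ by cases; your observation that $\sum_i|f(i)-i|\equiv\sum_i(f(i)-i)=0\pmod 2$ forces $n(n-1)/2$ to be even and reaches $n\equiv 0,1\pmod 4$ in two lines.

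The sufficiency direction, however, is not a proof but a description of two strategies you have not carried out. For the zig-zag template you have not specified the fixed point, the break point, or the closing tail, nor verified that the resulting $f$ is a bijection with difference multiset $\{0,\ldots,n-1\}$; you yourself call this bookkeeping ``the heart of the argument.'' The $T_n\Rightarrow T_{n+4}$ induction is in worse shape: you correctly note that no matching on the four boundary indices $\{1,2,n+3,n+4\}$ can supply the difference $n$, so interior pairs must be broken, but you give no rule for which pairs to break or how to re-absorb the displaced small differences. Either plan might be completable, but as written neither constitutes a proof.

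For comparison, the paper's sufficiency argument is an induction of a different shape, reducing $n$ by roughly a factor of three rather than subtracting four. Writing $n=3r+s$ with $s\in\{0,1,2\}$, it explicitly selects about $2r$ entries along two anti-diagonal strips to realize all the large differences $r,\ldots,n-1$, and shows that the unused rows and columns form a contiguous block isomorphic to one of $T_r$, $T_{r+1}^*$, or $T_{r+3}^{**}$ (where $T_m^*$ and $T_m^{**}$ are $T_m$ with one or two boundary row/column pairs deleted). A short observation that $T_m$, $T_m^*$, and $T_m^{**}$ are simultaneously solvable then closes the induction, and in each case one checks that $n\equiv 0,1\pmod 4$ matches the required congruence on the smaller parameter.
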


\begin{proof}
We use a counting argument to show that $T_n$ is not solvable for $n \equiv 2,3 \mod 4$, then construct a solution set for $T_n$ when $n \equiv 0, 1 \mod 4$.

Suppose $f:\{1,2,\dots ,n\} \to \{1,2,\dots ,n\}$ is a solution for
$T_n$. As $f$ is a permutation, it follows that

\begin{equation}  \label{E:first}
\sum _{i=1} ^n \bigl ( f(i) \bigr ) ^2 = \sum _{i=1} ^n i^2 =
\frac{n(n+1)(2n+1)}6.
\end{equation}
As $|f(i)-i|$ achieves each of the values $\{0,\ldots ,n-1\}$ exactly
once, we have

\begin{equation}  \label{E:second}
\sum _{i=1} ^n |f(i)-i|^2 = \sum _{j=0} ^{n-1} j^2 =
\frac{(n-1)n(2n-1)}6.
\end{equation}
However, it is also the case that
\[
\sum _{i=1} ^n |f(i)-i|^2 = \sum _{i=1} ^n \bigl ( f(i)-i \bigr )^2
= \sum _{i=1} ^n \bigl ( f(i) \bigr ) ^2 -2\sum _{i=1}^n if(i) +
\sum _{i=1} ^n i^2.
\]
Simplifying, we obtain
\[
\sum _{i=1} ^n |f(i)-i|^2 =2\sum _{i=1} ^n i^2 - 2\sum _{i=1}^n
if(i).
\]
Using \eqref{E:first} and \eqref{E:second} gives
\[
\frac{(n-1)n(2n-1)}6 = 2\bigl( \frac{n(n+1)(2n+1)}6\bigr) - 2\sum
_{i=1}^n if(i)
\]
or
\[
\sum _{i=1}^n if(i) = \frac 1{12} \bigl ( 2n(n+1)(2n+1)-(n-1)n(2n-1)
\bigr ).
\]
Simplifying the right-hand side and recognizing that the left-hand
side is an integer yields
\[
n(2n^2+9n+1) \equiv 0 \mod 12.
\]
Suppose first that $n \equiv 2 \mod 4$. Since $n$ is even, $2n^2+9n+1$ is odd. But this together with $ n(2n^2+9n+1) \equiv 0 \mod 12$ implies $n \equiv 0 \mod 4$, a contradiction. Thus no solutions exist for $n \equiv 2 \mod 4$. Now assume that $n\equiv 3 \mod 4$. A short calculation shows that $2n^2+9n+1 \equiv 2 \mod 4$, so $ n(2n^2+9n+1) \equiv 2n  \equiv 2 \mod 4$ (under the assumption
that $n \equiv 3 \mod 4$). But $n(2n^2+9n+1) \equiv 2 \mod 4$ contradicts $ n(2n^2+9n+1) \equiv 0 \mod 12$. Therefore there are no solutions when $n \equiv 3 \mod 4$.

\medskip

It remains to show that $T_n$ is solvable when $n\equiv 0,1$ mod $4$. The proof will be done by induction on $n$. 

We begin by defining two matrices that can be obtained from $T_n$. 
\begin{itemize}
\item $T_n^*$ is the matrix obtained from $T_n$ by deleting the first column and the last row of $T_n$.
\item $T_n^{**}$ is obtained from  $T_n$ by deleting the first and last row as well as the first and the $(n-1)^{st}$ column of $T_n$.
\end{itemize}
Figure \ref{fig:derive} illustrates $T_n^*$ and $T_n^{**}$. We will call $T_n^*$ solvable if we can select entries with values $0,\ldots,n-2$, one from each row and each column;  likewise $T_n^{**}$ will be called solvable if we can select entries with values $0,\ldots,n-3$, one from each row and each column. Note that if $T_n^*$ is solvable, then so is its transpose, $(T_n^*)^T$, which is obtained from $T_n$ by removing the first row and the last column. We will not distinguish between $T_n^*$ and $(T_n^*)^T$. The same holds for $T_n^{**}$ and $(T_n^{**})^T$.

\begin{figure}
\begin{center} \includegraphics[scale=.7]{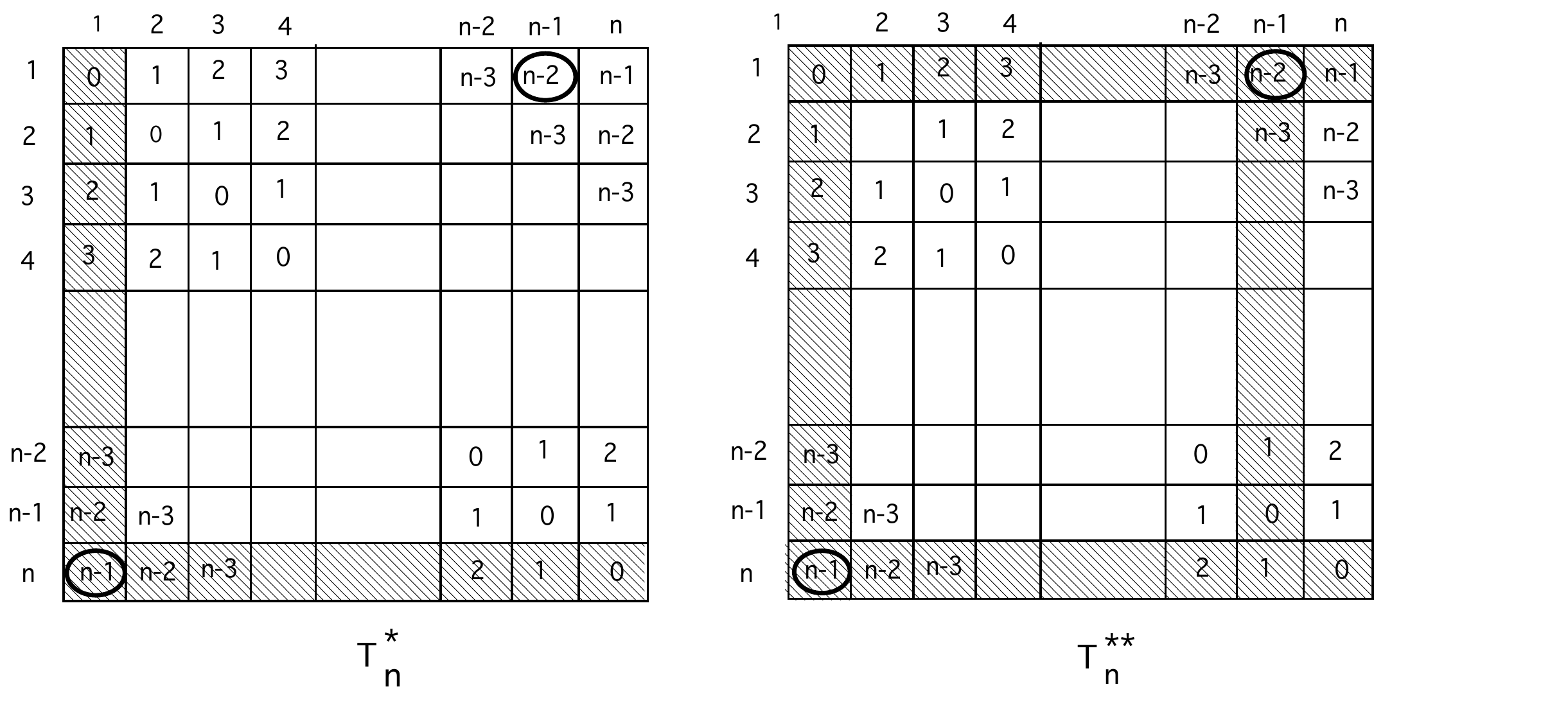}
\end{center} \caption{}
\label{fig:derive}
\end{figure}

If $\Tn$ is solvable, without loss of generality we can assume that the $(n,1)$ entry was selected as the entry of value $n-1$, see Figure \ref{fig:derive}. By symmetry we may assume that the $(1, n-1)$ entry was selected as the entry of value $n-2$. It is easy to see that removing the $(n,1)$ entry from the solution set of $\Tn$ gives a solution set for $T_n^*$ and removing both the $(n,1)$ and the $(1, n-1)$ entries gives a solution for $T_n^{**}$. Similarly adding one or both of these entries to the solution of $T_n^{**}$ results in solutions for $T_n^*$ and $\Tn$ respectively. We conclude that, for fixed $n$, either all three of $T_n, T_n^{*}$ and $T_n^{**}$ are solvable or none of them is.

Solutions for $T_4$ and $T_5$ are provided in Figure \ref{fig:toep}.  We assume the result for all $k<n$. The solution set depends on the remainder of $n \mod 3$, so we write $n=3r+s$, where $s$ may be 0, 1, or 2. Cases 2 and 3 are similar to Case 1 and thus will be written out in less detail.

\textbf{Case 1: $n=3r+0$}. We build a solution set $S_n$ as follows: Select entries $(n,1),(n-1,2), \ldots,(n-r+1,r)=(2r+1,r)$ which have values $n-1, n-3,\ldots,n-2r+1=r+1$ respectively. Also select entries $(1,n-1),(2,n-2),\ldots,(r-1,n-(r-1))=(r-1,2r+1)$ which have values $n-2, n-4,\ldots,n-2r+2=r+2$ and finally select the entry $(n-r,n)=(2r,n)$ with value $r$. Thus far $S_n$ contains all values $r,r+1,\ldots,n-1$ and no two selections are in the same row or column.

The entries that are still available to be selected (i.e., those lying in a row or column not yet selected) have indices $(a,b)$ with $r \leq a \leq 2r-1$ and $r+1\leq b \leq 2r$.  As each entry is equal to the difference between its row and column index, we can subtract $r-1$ from each of the row and column indices and obtain the matrix with row labels $1$ through $r$ and column labels $2$ through $r+1$. The matrix we obtain is $T_{r+1}$ with the last row and the first column deleted, i.e., it is $T_{r+1}^*$. Thus if $T_{r+1}^*$ has a solution set $S_{r+1}^*$, adding this solution set to the entries we already selected completes $S_n$. By induction we know $T_{r+1}^*$ is solvable as long as $r+1\equiv 0 \text{ or } 1 \mod 4$. However it is easy to check that $n\equiv 0 \text{ or } 1 \mod  4$ if and only if $r+1\equiv 0 \text{ or }  1 \mod  4$.

\textbf{Case 2: $n=3r+1$.} Select entries $(n,1),(n-1,2), \ldots,(n-r+1,r)=(2r+2, r)$ and $(1,n-1),(2,n-2),\ldots,(r,n-r)=(r, 2r+1)$ and $(n-r,n)=(2r+1,n)$. So far we have selected values with entries $r,r+1,\ldots,n-1$.  The entries that are still available to be selected have indices $(a,b)$ such that $r+1 \leq a \leq 2r$ and $r+1\leq b \leq 2r$. By subtracting $r$ from both the column and row labels, we see that the matrix that remains is $T_r$. Again, it is easy to check that $n\equiv 0 \text{ or } 1 \mod  4$ if and only if $r\equiv 0$ or 1 $\mod  4$.  Thus, by the induction hypothesis, we can complete the solution set $S_n$.

\textbf{Case 3: $n=3r+2$.} Begin by selecting entries $(n,1),(n-1,2), \ldots,(n-r+1,r)=(2r+3,r)$ and $(1,n-1),(2,n-2),\ldots,(r,n-r)=(r, 2r+2)$ and $(n-(r+1),n)=(2r+1,n)$.  The values we have selected so far are $r+1,\ldots,n-1$. The entries that are still available to be selected are those with indices $(a,b)$ such that $r+1 \leq a \leq 2r$ or $a=2r+2$ and $r+1 \leq b \leq 2r+1$.  By subtracting $r-1$ from all row and column labels we see that these are the entries of the transpose of $T_{r+3}^{**}$ which by the induction hypothesis is solvable as long as $r+3 \equiv 0 \text{ or } 1 \mod 4$.  Again we see that $n \equiv 0 \text{ or } 1 \mod 4$ if and only if $r+3 \equiv 0 \text{ or } 1 \mod 4$, and thus we can complete the solution set $S_n$.
\end{proof}

A symmetric Toeplitz chessboard is an $n \times n$ chessboard in which the squares are labeled with numbers so that the labels along each descending diagonal are constant and the labeling is symmetric with respect to the main descending diagonal. A queen placed on the chessboard may attack other queens in the same row or column as well as those placed on a square with the same number value.

\begin{cor}\label{thm:queencor}
It is possible to place $n$ queens on an $n \times n$ symmetric Toeplitz chessboard so that no two queens may attack each other if and only if $n\equiv 0,1 \mod 4$.

\end{cor}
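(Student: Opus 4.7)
\emph{Plan.} The corollary is essentially a rephrasing of Theorem~\ref{thm:solvable} in the language of chessboards, so the strategy is simply to establish an equivalence between non-attacking queen placements on a symmetric Toeplitz chessboard and solutions of $T_n$.

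The key observation is that on any symmetric Toeplitz chessboard the label of the square $(i,j)$ is determined by $|i-j|$: constancy along descending diagonals makes the label depend only on $i-j$, and symmetry across the main descending diagonal identifies $i-j$ with $-(i-j)$. Hence such a chessboard carries at most $n$ distinct labels, one for each value $|i-j| \in \{0,1,\ldots,n-1\}$.

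Now, a non-attacking placement of $n$ queens must occupy $n$ distinct rows, $n$ distinct columns, and $n$ pairwise distinct labels. The label condition forces the $n$ diagonal classes $D_0, D_1, \ldots, D_{n-1}$ to receive distinct labels, since otherwise strictly fewer than $n$ labels would be available on the board. Once the $D_k$ are all labeled differently, saying the queens sit on $n$ different labels is exactly saying there is a permutation $f$ of $\{1,\ldots,n\}$ for which $|f(i)-i|$ hits each of $0,1,\ldots,n-1$ exactly once, i.e., that $T_n$ is solvable in the sense of the earlier formulation.

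From here both directions follow immediately. For sufficiency, when $n \equiv 0,1 \pmod 4$ Theorem~\ref{thm:solvable} supplies a solution set $S_n$ for $T_n$; interpreted on the specific chessboard whose square $(i,j)$ is labeled $|i-j|$ (itself a symmetric Toeplitz chessboard), this set describes a valid placement of $n$ non-attacking queens. For necessity, when $n \not\equiv 0,1 \pmod 4$ Theorem~\ref{thm:solvable} rules out any such permutation, so by the reduction above no symmetric Toeplitz chessboard of that size admits $n$ non-attacking queens. I do not foresee any real obstacle; the combinatorial content was settled in Theorem~\ref{thm:solvable}, and what remains is purely a translation between the matrix formulation and the chessboard formulation.
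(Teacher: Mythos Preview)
Your proposal is correct and follows the same approach as the paper: reduce the chessboard statement to the solvability of $T_n$ and invoke Theorem~\ref{thm:solvable}. If anything, you are more careful than the paper, which simply asserts ``without loss of generality assume the squares of the chessboard are labeled with the entries of $T_n$''; your pigeonhole observation that $n$ pairwise non-attacking queens force the $n$ diagonal classes $D_0,\ldots,D_{n-1}$ to carry distinct labels is exactly what justifies that reduction.
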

 \begin{proof}
Without loss of generality assume the squares of the chessboard are labeled with the entries of $T_n$. Then placing the nonattacking queens on the chessboard is equivalent to finding a solution for $T_n$. The result follows by Theorem \ref{thm:solvable}.

 \end{proof}

Corollary \ref{thm:queencor} establishes the existence (and nonexistence) of solutions to the $N$-queens Problem on the particular modification of the chessboard discussed in this paper. However, variants of this problem analogous to those on other chessboards remain to be addressed. For instance:
\begin{enumerate}
\item What is the minimum number of queens necessary for each square of the board to either contain a queen or to be attacked by at least one queen? (What is the minimum cardinality of a dominating set, i.e., what is the domination number?)
\item When $n\equiv 2, 3 \mod 4$, what is the maximum number of queens that can be placed on the board so that no queen can attack another? (What is the maximum cardinality of an independent dominating set?)
\item Substitute another type of chess piece for queens and ask analogous questions.
\item How many (fundamental) solutions exist for each question?
\end{enumerate}

Theorem \ref{thm:n-1} resolves the second question.
\begin{thm} \label{thm:n-1}
It is possible to place $n-1$ nonattacking queens on an $n \times n$ chessboard corresponding to a symmetric Toeplitz matrix.
\end{thm}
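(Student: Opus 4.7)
The case $n \equiv 0, 1 \pmod{4}$ is immediate from Corollary \ref{thm:queencor}: one simply removes a queen from the placement it provides. The substance of the theorem is therefore in the cases $n \equiv 2, 3 \pmod{4}$, and my plan is to reduce each of these to a smaller $T_m$ that is already known to be solvable by Theorem \ref{thm:solvable}.

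The underlying observation is that the entry $|i-j|$ of $T_n$ is unchanged if both $i$ and $j$ are shifted by the same integer. Hence the $k \times k$ sub-matrix of $T_n$ indexed by rows $\{t+1, \ldots, t+k\}$ and columns $\{t+1, \ldots, t+k\}$ is identical to $T_k$ entry by entry. Consequently, any solution set of $T_k$ embeds in such a sub-block and yields $k$ nonattacking queens in $T_n$ realizing exactly the values $\{0, 1, \ldots, k-1\}$.

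When $n \equiv 2 \pmod{4}$, I take $k = n-1 \equiv 1 \pmod{4}$ and embed a solution of $T_{n-1}$ into the top-left $(n-1) \times (n-1)$ block. This produces $n-1$ nonattacking queens omitting only the value $n-1$. When $n \equiv 3 \pmod{4}$, the choice $k = n-1$ fails (since $n-1 \equiv 2 \pmod{4}$), so I instead take $k = n-2 \equiv 1 \pmod{4}$ and embed a solution of $T_{n-2}$ into the central block on rows and columns $\{2, \ldots, n-1\}$. The resulting $n-2$ queens use values $\{0, \ldots, n-3\}$, leaving rows $\{1, n\}$ and columns $\{1, n\}$ free. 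Adding one further queen at the anti-diagonal corner $(1, n)$ contributes a new row, a new column, and the previously unused value $n-1$, giving $n-1$ nonattacking queens (only the value $n-2$ is skipped).

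The construction is entirely explicit, so I do not anticipate a serious technical obstacle; the one point to verify carefully is the translation-invariance claim, which is immediate from $t_{ij} = |i-j|$. A quick sanity check at $n = 3$ (embed the trivial solution of $T_1$ at $(2,2)$, then add a queen at $(1,3)$) and $n = 6$ (embed a solution of $T_5$ in the top-left $5 \times 5$ block) confirms the construction in the two smallest nontrivial cases.
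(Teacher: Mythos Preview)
Your argument is correct. The translation-invariance of $t_{ij}=|i-j|$ indeed makes any contiguous $k\times k$ diagonal block of $T_n$ a copy of $T_k$, and your case split on $n\bmod 4$ reduces cleanly to solvable instances via Theorem~\ref{thm:solvable}: $T_n$ itself when $n\equiv 0,1$, $T_{n-1}$ when $n\equiv 2$, and $T_{n-2}$ together with the corner $(1,n)$ when $n\equiv 3$. The verifications you sketch (distinct rows, columns, and $|i-j|$ values) go through exactly as stated.

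This is, however, a genuinely different route from the paper's. The paper does \emph{not} invoke Theorem~\ref{thm:solvable} at all; instead it writes down a single explicit placement valid for every $n$ (a queen at $(1,1)$, a run of queens on the antidiagonal strip $(n+1-i,\,i+1)$, and a second run $(j,\,n+3-j)$), and then checks directly that rows, columns, and diagonal indices are distinct, treating $n$ even and $n$ odd separately. The trade-offs: your proof is shorter and more conceptual, but it leans on the much harder constructive half of Theorem~\ref{thm:solvable} as a black box, and it fragments into four residue classes. The paper's proof is fully self-contained and uniform in $n$ (so Theorem~\ref{thm:n-1} stands independently of Theorem~\ref{thm:solvable}), at the cost of a page of index bookkeeping. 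Either is perfectly acceptable; just be aware that your version makes Theorem~\ref{thm:n-1} logically downstream of Theorem~\ref{thm:solvable}, whereas the paper keeps them independent.
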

\begin{proof}
We provide such a placement of $n-1$ nonattacking queens as follows. 
\begin{enumerate}
\item Place a queen on square $(1,1)$. 
\item For each $i$ from 1 to $\lceil \frac n2 \rceil -1$, place a queen on square $(n+1-i, i+1)$. 
\item For each $j$ from 3 to $\lfloor \frac n2 \rfloor +1$, place a queen on square $(j, n+3-j)$. 
\end{enumerate}
Figure \ref{fig:n-1fig} indicates this placement. To facilitate detail-checking, we examine $n$ even and $n$ odd separately.

\begin{figure}
\begin{center} \includegraphics[scale=.4]{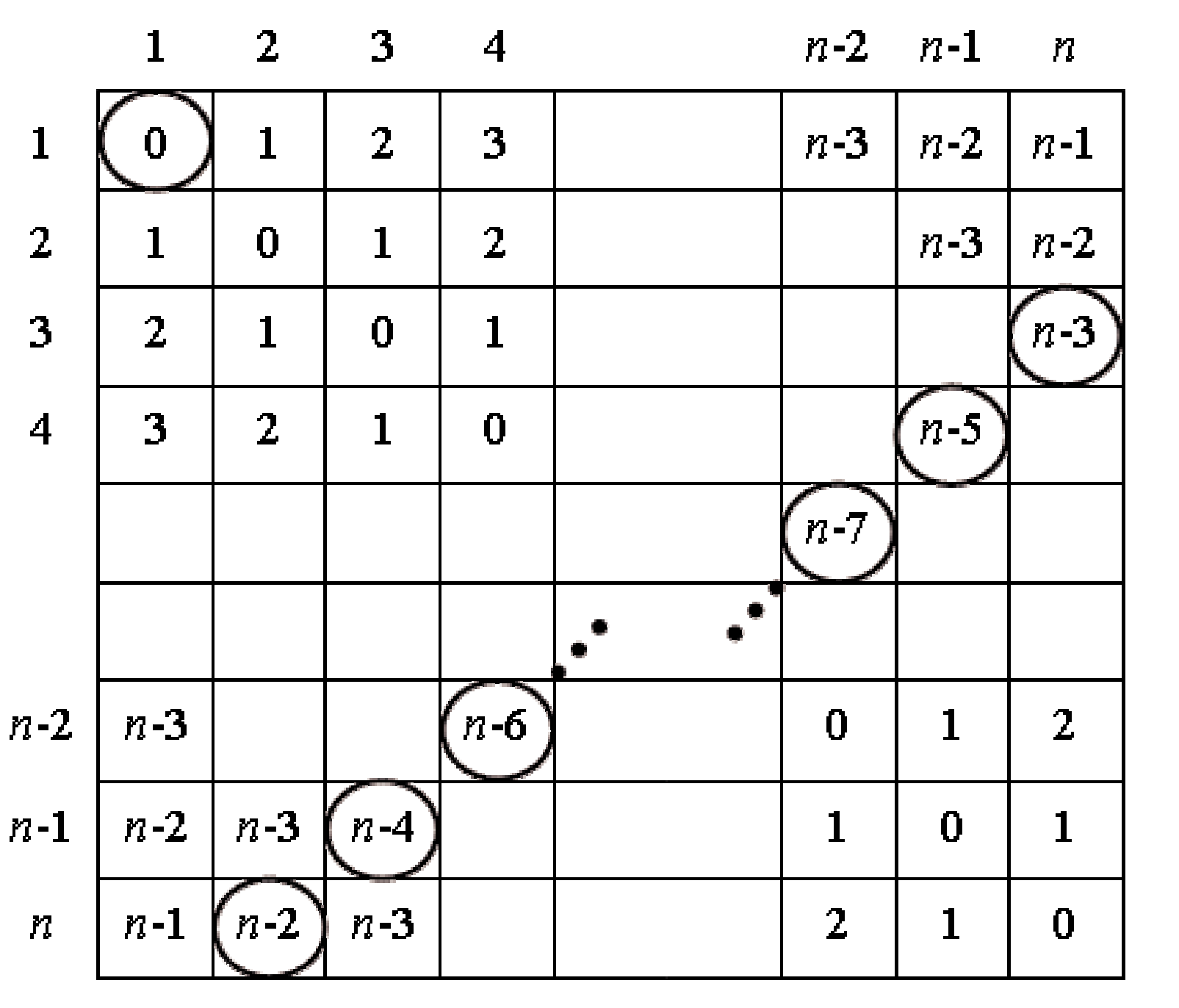}
\end{center} \caption{}
\label{fig:n-1fig}
\end{figure}

\textbf{Case 1: $n$ is even}.
Examine the rows occupied by queens. Step 1 places a queen in row 1; step 2 places queens in rows $ \frac n2 +2, \dots n$; step 3 places queens in rows $3, \dots , \frac n2 +1$. The $n-1$ queens occur once in every row with the exception of row 2 only. Similarly, we have queens in column 1 (step 1), in columns $2, \dots  \frac n2$ (step 2), and in columns $\frac n2 +2,\dots, n$ (step 3). Every column except column $\frac n2 +1$ has exactly one queen. Calculating the differences of the row and column indices shows that there is exactly one queen in each of the diagonals indexed by 0 (step 1), by $2, 4, \dots , n-2$ (step 2), and by $1, 3, \dots n-3$ (step 3). (The diagonal indexed by $n-1$ has no queen.)

\textbf{Case 2: $n$ is odd}. Rows occupied by a queen are row 1 (step 1), rows $\frac{n+3}2, \dots n$ (step 2), and rows $3, \dots \frac{n+1}2$ (step 3). Only row 2 has no queen, so the other rows have one queen each. Columns occupied by a queen are column 1 (step 1), columns $2, \dots \frac{n+1}2$ (step 2), and columns $\frac{n+5}2, \dots n$ (step 3). The only column with no queen is column $\frac {n+3}2$, so again we conclude that all the remaining columns contain exactly one queen. (Note that, as $n$ is odd, $\frac {n+3}2$ may be written as $\lceil \frac n2\rceil+1$, and $\lceil \frac n2\rceil+1 = \frac n2+1$ when $n$ is even, so the expression $\lceil \frac n2\rceil+1$ identifies the column without a queen for all values of $n$.) Finally, we check the diagonals occupied by queens: diagonal 0 (step 1), diagonals $1, 3, \dots , n-2$ (step 2), and diagonals $2, 4, \dots , n-3$ (step 3). As before, only the diagonal indexed by $n-1$ has no queen; the others have exactly one each. 
\end{proof}

 \section*{Aknowledgements}

The authors would like to thank the Department of Mathematics and Computer Science at Valparaiso University for hosting us while part of this research was carried out, as well as two referees whose helpful comments improved the final version.  The third author also thanks CSU Channel Islands for the Mini-Grant which provided time to work on this project.

\end{document}